\title{\LARGE \bf Decentralized and Equitable Optimal Transport}
\author{
Ivan Lau, Shiqian Ma, and C\'esar A. Uribe
\thanks{
IL (\href{mailto:ivanphlau@gmail.com}{\texttt{ivanphlau@gmail.com}}) is with the Department of Computer Science, National University of Singapore, Singapore. Part of the research was done when the author was at the Department of Electrical and Computer Engineering, Rice University, Houston, TX, USA.
}
\thanks{
SM (\href{mailto:sqma@rice.edu}{\texttt{sqma@rice.edu}}) is with the Department of Computational Applied Mathematics and Operations Research, Rice University, Houston, TX, USA.
Research of SM is supported in part by NSF grants DMS-2243650, CCF-2308597, CCF-2311275 and ECCS-2326591, and a startup fund from Rice University.
}
\thanks{
CAU (\href{mailto:cauribe@rice.edu}{\texttt{cauribe@rice.edu}}) is with the Department of Electrical and Computer Engineering, Rice University, Houston, TX, USA. 
The work of CAU and IL was partially supported by the National Science Foundation under Grants No. 2211815 and No. 2213568.
 }
 }
\date{Rice University}
\newcommand{\R}{\mathbb{R}}
\newcommand{\one}{\bm{1}}
\newcommand{\zero}{\bm{0}}
\newcommand{\vecx}{\mathbf{x}}
\newcommand{\vecy}{\mathbf{y}}
\newcommand{\vecp}{\mathbf{p}}
\newcommand{\vecq}{\mathbf{q}}
\DeclareMathOperator*{\argmin}{argmin}
\theoremstyle{definition}
\newtheorem{theorem}{Theorem}[section]
\newtheorem{proposition}[theorem]{Proposition}
\newtheorem{lemma}[theorem]{Lemma}
\newtheorem{corollary}[theorem]{Corollary}
\newtheorem{remark}[theorem]{Remark}
\newtheorem{assumption}[theorem]{Assumption}
\let\NAT@parse\undefined
\definecolor{darkblue}{rgb}{0,0,0.6}
\begin{document}
\maketitle

\begin{abstract}
    This paper considers the decentralized (discrete) optimal transport  (D-OT) problem. 
    In this setting, a network of agents seeks to design a transportation plan jointly, where the cost function is the sum of privately held costs for each agent.
    We reformulate the D-OT problem as a constraint-coupled optimization problem and propose a \textit{single-loop} decentralized algorithm with an iteration complexity of $O(1/\epsilon)$ that matches existing centralized first-order approaches.
    Moreover, we propose the decentralized \textit{equitable} optimal transport (DE-OT) problem. In DE-OT, in addition to cooperatively designing a transportation plan that minimizes transportation costs, agents seek to ensure equity in their individual  costs.
    The iteration complexity of the proposed method to solve DE-OT is also  $O(1/\epsilon)$. This rate improves existing centralized algorithms, where the best iteration complexity obtained is $O(1/\epsilon^2)$.
\end{abstract}

\section{Introduction}

\label{sec: intro}
 Optimal Transport (OT) problem is a well-studied problem  tracing back to the early work of Monge~\cite{monge1781memoire} and Kantorovich~\cite{kantorovich1942translocation}. 
It has recently gained interest in the machine learning community due to its wide-ranging applications (see~\cite{montesuma2023recent} and references therein).
In the standard OT setting, there is one cost function, and the goal is to design a minimal-cost plan that transports ``mass'' from one probability distribution to another.
The key challenge of OT in modern applications is the computational aspect since the probability distributions are typically high-dimensional.

Recently, Scetbon et al.~\cite{Scetbon21} and Huang et al.~\cite{huang2021convergence} studied a variant of the OT problem, known as Equitable Optimal transport (EOT), and they showed that EOT is related to problems in economics such as fair cake-cutting problem~  and resource allocation.
In EOT problem, there are $N$ agents, each with a cost function, and the goal is to design a plan that minimizes the sum of the agents' transportation costs under the constraint that the cost is shared equally.

Existing works on OT and EOT consider a centralized approach in which a single agent/server designs the transportation plan.
Motivated using a decentralized optimization framework to 
solve large-scale optimization, we study a  decentralized variant of OT and EOT problems in this paper.

\textbf{Decentralized OT (D-OT).}
Given two discrete probability distributions $p=(p_i)_{i=1}^n,  q=(q_j)_{j=1}^n \in \Delta^n$ and a cost matrix $C \in \R^{n \times n}_+$  with $C_{ij} \ge 0$ corresponds to the unit cost of moving from $p_i$ to~$q_j$,
the Kantorovich formulation of (discrete) OT is equivalent
to solving the following linear programming (LP) problem
\begin{equation}
\label{eq: OT}
     \min_{ X \in \R^{n \times n}_+ } 
    \sum_{i,j} C_{ij} X_{ij}
    \ \text{s.t.} \
    X \one_n = p \, \text{ and }
    X^{\intercal} \one_n = q.
\end{equation}
Here the optimization variable $X$ and the objective function $  \langle C, X \rangle \coloneqq
    \sum_{i,j} C_{ij} X_{ij}$ 
are referred to as the transportation plan and  the transportation cost, respectively, and the constraint $X \one_n = p $ and $X^{\intercal} \one_n = q$
is referred to as the marginal constraint.
Since the OT problem is an LP with $2n$ equality constraints and $n^2$ variables, finding an exact solution is infeasible in practice for large $n$.
In general, we aim to find an 
$\epsilon$-approximate solution $\widehat{X}$ such that 
\begin{equation}
\label{eq: marginal feasibility}
    \langle C, \widehat{X} \rangle 
    -
      \langle C, X^* \rangle
      +
     \left\|  \widehat{X} \one_n - p  \right\| +
     \left\|  \widehat{X}^{\intercal} \one_n - q\right\| \le \epsilon,
\end{equation}
where $X^*$ is a minimizer.
Given an $\epsilon$-approximate solution, one can find an $O(\epsilon)$-approximate solution satisfying the marginal constraint using \cite[Lemma 7]{altschuler2017near}.
In this paper, we consider a decentralized variant of OT (D-OT), in which 
Problem~\eqref{eq: marginal feasibility}
is solved by~$n$~agents collboratively over a network.
In particular, agent $k$ has access to the $k^{\text{th}}$ column
$c_k$ of $C$ 
and they work on optimizing the corresponding column~$x_k$ of~$X$. 
Furthermore, each agent can exchange information only with its immediate neighbors.
We formally formulate D-OT as a decentralized finite sum problem:
\begin{equation}
\label{eq: decentralized OT}
     \min_{ x_i \in \R^{n}_+ } 
    \sum_{i=1}^n c_i^{\intercal}x_i
    \ \text{s.t.} \
    \sum_{i=1}^n x_i  = p \, \text{ and } 
    x_i^{\intercal} \one_n = q_i,
\end{equation}
where $q_i$ is entry $i$ of $q$.

\textbf{Decentralized Equitable Optimal Transport (DE-OT).}
We now describe the EOT formulation by~\cite{Scetbon21} using the above notations.
In this setting, we have~$N$~agents with each agent~$k$ being given a  cost matrix~$C^k \in \R^{n \times n}_+$ where $C^k_{ij}$ corresponds to the unit cost of moving from $p_i$ to~$q_j$. 
The EOT problem  aims to find a transportation
plan $  (X^k)_{k=1}^N$ such that the transportation cost $\langle C^k, X^k \rangle $ among all the agents are equal to each other, and
the sum of the cost is minimized:
\begin{equation}
\label{eq: EOT}
\begin{aligned}
    & \min_{ X^k \in \R^{n \times n}_+} 
    \sum_{k=1}^N \langle C^k, X^k \rangle 
    \\
    \text{ s.t. }
    & \langle C^k, X^k \rangle = \langle C^l, X^l \rangle \text{ for all } k,l \in [N], \\
    & \left( \sum_{k=1}^N X^k \right) \mathbf{1}_n = p \, \text{ and } \left( \sum_{k=1}^N X^k \right)^{\intercal} \mathbf{1}_n = q.
\end{aligned}
\end{equation}
The algorithms proposed in~\cite{Scetbon21, huang2021convergence} consider a centralized setting, where $X^1, \dots, X^N$ are designed by a central authority who has perfect information of $C^1, \dots, C^N$.
In this paper, we consider a decentralized variant of EOT (DE-OT), in which 
agent~$k \in [N]$ knows only $C^k$, and works only on local optimization variable $X^k$ such that collectively $(X^k) $ solves~\eqref{eq: EOT}.
Like D-OT, each agent can communicate only with their immediate neighbors in some network. 
Our goal is to find an $\epsilon$-approximate solution $\big(\widehat{X}^k\big)$ such that the sum of optimality gap~\eqref{eq: EOT opt gap}, marginal constraint violation~\eqref{eq: marginal constraint violation},
and equitable constraint violation~\eqref{eq: equitable constraint violation} is at most $\epsilon$, i.e.,
\begin{align}
    \label{eq: EOT opt gap}
    &\sum_{k=1}^N \langle C^k, \widehat{X}^k \rangle 
    -
    \sum_{k=1}^N \langle C^k, (X^*)^k \rangle \\
    +&\label{eq: marginal constraint violation}
     \left\|  \left( \sum_{k=1}^N \widehat{X}^k \right)\one_n - p  \right\|+
     \left\|  \left( \sum_{k=1}^N \widehat{X}^k \right)^{\intercal} \one_n - q\right\| \\
    +&\label{eq: equitable constraint violation}
    \frac{1}{N}
    \sum_{k=1}^N
    \left\|
    \langle C^k, \widehat{X}^k  \rangle - 
    \frac{1}{N} \sum_{k=1}^N \langle C^k, \widehat{X}^k \rangle
    \right\| \le \epsilon.
\end{align}

The main contributions of this paper are as follows.
\begin{itemize}
    \item We provide the first study of D-OT problem~\eqref{eq: decentralized OT} and the first study of DE-OT problem~\eqref{eq: EOT}, and show their equivalence with instances of distributed constraint-coupled optimization (DCCO) problems.
    
    \item We propose a single-loop decentralized algorithm that finds $\epsilon$-approximate solutions to Problem~\eqref{eq: decentralized OT} in $O(1/\epsilon)$ iterations.
    This iteration complexity matches existing centralized first-order approaches.

    \item We  propose a single-loop decentralized algorithm that finds $\epsilon$-approximate solution to Problem~\eqref{eq: EOT} in $O(1/\epsilon)$ iterations,
    This iteration complexity improves over existing centralized algorithms. Furthermore, our algorithm guarantees that the equitable constraint is satisfied at a rate of $O(1/k)$, while the existing centralized algorithms do not have such guarantees.
\end{itemize}

This paper is organized as follows. In Section \ref{sec: related work}, we describe existing approaches for decentralized and equitable OT problems. Section \ref{sec:reformulation} shows the reformulation of Problems~\eqref{eq: decentralized OT} and~\eqref{eq: EOT} as distributed constrained-coupled optimization problems. Section \ref{sec:euclidean} introduces our single-loop decentralized algorithm that finds $\epsilon$-approximate solutions 
in $O(1/\epsilon)$ iterations. We conclude with numerical results in Section \ref{sec:numerics}, and discussions and future work in Section \ref{sec:conclusion}.

\section{Existing approaches for decentralized and equitable OT problems }
\label{sec: related work}
The related work on optimal transport is extensive (e.g., see \cite{peyre2019computational} and the references therein); we only provide a brief outline here, emphasizing the most closely related works.

\textbf{Algorithms for (centralized) OT.}
Traditional LP algorithms are not scalable due to their arithmetic complexity of $\tilde{O}(n^3)$. 
In comparison, the approach of
solving the \textit{entropic-regularized} OT \cite{cuturi2013sinkhorn} have initiated a productive line of research. Existing algorithms to solve this approximation problem include
Sinkhorn~\cite{cuturi2013sinkhorn} and Greenkhorn~\cite{altschuler2017near} which have a complexity  of~$\tilde{O}(n^2/\epsilon^2)$~\cite{lin2022efficiency}, as well as first-order methods such as primal-dual methods~\cite{dvurechensky2018computational, lin2022efficiency, chambolle2022accelerated} with a complexity of~$\tilde{O}(n^{2.5}/\epsilon)$, alternating minimization with a complexity of~$\tilde{O}(n^{2.5}/\epsilon)$, dual extrapolation~\cite{jambulapati2019direct} with a complexity of~$\tilde{O}(n^{2}/\epsilon)$, and extragradient~\cite{li2023fast} with a complexity of~$\tilde{O}(n^{2}/\epsilon)$.
However, the entropy term causes the plan to be fully dense, which can be undesirable in certain applications.
Consequently, there has been a growing interest \cite{lorenz2021quadratically, pasechnyuk2023algorithms}
in Euclidean-regularized OT since it results in sparse transportation plans. Additionally, algorithms for Euclidean-regularized OT tend to be more robust than entropic-regularized OT when a small regularization parameter is used.
\begin{remark}
\label{rem: OT alg in practice}
    While this paper focuses on theoretical complexities, we remark that an algorithm
    with a better complexity may not necessarily be faster in practice than another algorithm with a worse complexity. For instance,  the methods with iteration complexity of $O(1/\epsilon)$ above
    are generally slower than Sinkhorn~\cite{cuturi2013sinkhorn} and Greenkhorn~\cite{altschuler2017near} in practice. Likewise,
    Sinkhorn and Greenkhorn are slower than off-the-shelf LP solver
    for small regularization constant $\epsilon$.
\end{remark}

\textbf{Algorithms for (centralized) EOT.}
The authors in \cite{Scetbon21, huang2021convergence} did not solve the EOT Problem~\eqref{eq: EOT} directly, and instead solved the following formulation. which are
equivalent~\cite[Proposition 1]{Scetbon21} when the entries of the cost matrices are the same (e.g., all non-negative):
\begin{equation}
\label{eq: minmax EOT}
\begin{aligned}
    & \min_{ X^k \in \R^{n \times n}_+ } 
    \max_{\vecp = (p_k) \in \Delta_+^N} 
   \sum_{k=1}^N p_k \langle  C^k, X^k \rangle \\
   \text{ s.t. }
    & \left( \sum_{k=1}^N X^k \right) \mathbf{1}_n = p \, \text{ and } \left( \sum_{k=1}^N X^k \right)^{\intercal} \mathbf{1}_n = q.
\end{aligned}
\end{equation}
In particular,~\cite{Scetbon21} proposed to solve an entropy-regularized approximation to~\eqref{eq: minmax EOT} and designed a projected alternating maximization (PAM)  algorithm to solve its dual. 
The iteration complexity of this approach is shown by \cite{huang2021convergence} to be $O(1/\epsilon^2)$. The authors in~\cite{huang2021convergence} also gave an accelerated version of PAM, but they did not manage to show an improved iteration complexity. We note that these algorithms require projection onto simplex at each iteration, making it hard to be implemented in a decentralized manner.


\textbf{Decentralization for OT.}
The work in \cite{zhang2019consensus, hughes2021fair, wang2023decentralized}
considered a decentralized optimal transport problem with an agent for each row and an agent for each column, with the row and column agents connected through a bipartite graph. 
Different from their framework, each agent in  D-OT works on one column of the plan, and the connected undirected network can be arbitrary.


\section{Reformulation to DCCO}\label{sec:reformulation}
In this section, we reformulate Problems~\eqref{eq: decentralized OT} and~\eqref{eq: EOT} into  distributed constraint-coupled optimization (DCCO) problems \cite{falsone2020tracking, chang2014multi}, where a set of agents cooperatively  minimize the sum of objective functions 
subject to a coupling affine equality constraint   
\begin{equation}
\label{eq: DCCO}
     \min_{ 
    x_i
    }\sum_{i=1}^N f_i(x_i) + g_i(x_i)
    \qquad
    \textrm{s.t.} \qquad  
    \sum_{i=1}^N A_i x_i = b,
\end{equation}
where $f_i$ is smooth and convex, while $g_i$ is convex but possibly non-smooth.

\subsection{Reformulating D-OT as a DCCO problem}
\label{sec: OT reformuation}
We use the following notations to be consistent with the optimization literature.
Agents are indexed using~$i$ (instead of~$k$),
while the decision variable of agent~$i$ and the $i^{\text{th}}$ column of the cost matrix $C$
are denoted by~$x_i$ and~${c}_i$.
With these notations, we reformulate D-OT to a DCCO problem.
\begin{lemma}
\label{lem: OT reformulation}
The OT problem~\eqref{eq: OT} is equivalent to
\begin{equation}
\label{eq: OT unregularized resource allocation formulation}
    \min_{ x_i}
    \sum_{i=1}^n  c_i^{\intercal} x_i
    + \iota_{\ge \zero}( x_i)
    \
    \textrm{s.t.} \  
    \sum_{i=1}^n 
    M_i
    x_i = 
    \begin{bmatrix}
    p \\
    q 
    \end{bmatrix},
\end{equation}
where the indicator function 
$\iota_{\ge \zero}( y)$ is defined by
\begin{equation}
\label{eq: nonneg indicator}
    \iota_{\ge \zero}( y)
    \coloneqq
    \begin{cases}
        0 &\text{if } y \ge 0 \\
        \infty & \text{otherwise},
    \end{cases}
\end{equation}
and the matrix $ M_i \in \R^{2n \times n}$ is the $i^{\text{th}}$ column-block of
\begin{equation}
\label{def: margin mat}
    [M_1, \dots, M_n]
    = 
    M \coloneqq
     \begin{bmatrix}
    I_n & I_n &  \cdots & I_n  \\
    \one^{\intercal}_n & \bm{0}^{\intercal}_{n} & \cdots &\bm{0}^{\intercal}_{n} \\
    \bm{0}^{\intercal}_{n} & \one^{\intercal}_n & \cdots & \bm{0}^{\intercal}_{n} \\
    \vdots & \vdots &  \ddots & \vdots \\
    \bm{0}^{\intercal}_{n} & \bm{0}^{\intercal}_{n} & \cdots & \one^{\intercal}_n
    \end{bmatrix} .
\end{equation}
\end{lemma}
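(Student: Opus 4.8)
The plan is to exhibit an explicit, invertible change of variables between the two problems under which feasible points map to feasible points and the objective values coincide; this makes the two minimization problems share the same optimal value and minimizers, which is what ``equivalent'' means here. First I would identify the matrix variable $X$ of \eqref{eq: OT} with its columns by writing $X = [x_1, \dots, x_n]$, so that $x_i \in \R^n$ is exactly the $i^{\text{th}}$ column of $X$. Under this identification the two decision variables carry the same information, and the whole argument reduces to checking that the objective, the nonnegativity requirement, and each of the two marginal constraints transform into their counterparts in \eqref{eq: OT unregularized resource allocation formulation}.

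For the objective, since $c_i$ is the $i^{\text{th}}$ column of $C$ and $x_i$ the $i^{\text{th}}$ column of $X$, one has $c_i^{\intercal} x_i = \sum_j C_{ji} X_{ji}$, and summing over $i$ recovers $\langle C, X \rangle = \sum_{i,j} C_{ij} X_{ij}$ after relabeling. The constraint $X \in \R^{n \times n}_+$ is equivalent to $x_i \ge \zero$ for every $i$, which is exactly what the term $\iota_{\ge \zero}(x_i)$ from \eqref{eq: nonneg indicator} encodes: feasible columns contribute zero, while any column violating nonnegativity is assigned an infinite objective and is thereby excluded from the minimization.

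The heart of the argument is decoding the block structure of $M$ in \eqref{def: margin mat}. I would split the coupling constraint $\sum_i M_i x_i = (p, q)$ into its top $n$ rows and its bottom $n$ rows. The top block of every $M_i$ is $I_n$, so the top portion reads $\sum_i x_i = p$; since $\sum_i x_i$ is the sum of the columns of $X$, namely $X \one_n$, this is precisely the row-marginal $X \one_n = p$. For the bottom block, the $i^{\text{th}}$ group of $n$ rows has $\one_n^{\intercal}$ in the $i^{\text{th}}$ column-block and $\zero_n^{\intercal}$ elsewhere, so the bottom block of $M_i$ equals $e_i \one_n^{\intercal}$, where $e_i$ is the $i^{\text{th}}$ standard basis vector. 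Hence the bottom portion of $M_i x_i$ is $e_i (\one_n^{\intercal} x_i)$, and summing over $i$ produces the vector whose $i^{\text{th}}$ entry is $\one_n^{\intercal} x_i$. Setting this equal to $q$ gives $\one_n^{\intercal} x_i = q_i$ for each $i$, i.e. the column-marginal $X^{\intercal} \one_n = q$.

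The step I expect to require the most care is this last one: confirming that the sparse block pattern of $M$ encodes the two marginals at the right scale and without overlap, so that the bottom block genuinely separates into one scalar equation per column of $X$ rather than aggregating them as the top block does. Once the four correspondences (objective, nonnegativity, row-marginal, column-marginal) are verified, the column-stacking map is a bijection that preserves both feasibility and objective value in each direction, which establishes the claimed equivalence.
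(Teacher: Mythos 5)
Your proposal is correct and follows essentially the same route as the paper's own (much terser) proof: identify $X$ with its columns, observe that the objective and the nonnegativity constraint are encoded by $\sum_i c_i^{\intercal} x_i + \iota_{\ge \zero}(x_i)$, and read off the two marginal constraints from the block structure of $M$. Your explicit decoding of the bottom block of $M_i$ as $e_i \one_n^{\intercal}$, yielding the separate equations $\one_n^{\intercal} x_i = q_i$, is exactly the detail the paper leaves to ``the definition of $M$ and linearity of matrix multiplication,'' so you have simply spelled out the same argument more carefully.
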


\begin{proof} 
The objective function and non-negativity constraint of~\eqref{eq: OT} are encoded in the objective function of~\eqref{eq: OT unregularized resource allocation formulation}.
The marginal constraint also follows directly from the definition of~$M$ and the linearity of matrix multiplication.
\end{proof}
Note that it follows immediately that an
$\epsilon$-optimal solution (in terms of objective function value) that is also $\epsilon$-close to being feasible is an $\epsilon$-approximate solution for D-OT.
\begin{remark}
\label{rem: remove last row}
    Since $\mathrm{rank}(M) = 2n- 1$ we can remove the last row of both sides in the affine coupling constraint.
\end{remark}
\begin{proposition}
\label{cor: OT reformulation}
    Problem~\eqref{eq: OT} is equivalent to
    \begin{equation}
    \label{eq: OT resource allocation no last row}
    \min_{ x_i}
    \sum_{i=1}^n  c_i^{\intercal} x_i
    + \iota_{\ge \zero}( x_i)
    \
    \textrm{s.t.} \  
    \sum_{i=1}^n 
    \tilde{M}_i
    x_i = 
    \begin{bmatrix}
    p \\
    \tilde{q} 
    \end{bmatrix},
\end{equation}
where $\tilde{M}_i$ and $\tilde{q}$ are obtained from $M_i$
and $q$ with the last row removed.
\end{proposition}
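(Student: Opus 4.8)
The plan is to leverage Lemma~\ref{lem: OT reformulation}, which already establishes the equivalence between the OT problem~\eqref{eq: OT} and the full-constraint formulation~\eqref{eq: OT unregularized resource allocation formulation}. By transitivity, it then suffices to show that~\eqref{eq: OT unregularized resource allocation formulation} and~\eqref{eq: OT resource allocation no last row} are equivalent; since these two problems share the same objective function $\sum_i c_i^{\intercal} x_i + \iota_{\ge \zero}(x_i)$, the task reduces to proving that their feasible sets coincide.

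The two feasible sets differ only by the last row of the affine coupling constraint, namely the scalar constraint $\one_n^{\intercal} x_n = q_n$ coming from the bottom block $[\zero_n^{\intercal}, \dots, \zero_n^{\intercal}, \one_n^{\intercal}]$ of $M$ together with the last entry of $q$. Dropping a constraint can only enlarge the feasible set, so the feasible set of~\eqref{eq: OT unregularized resource allocation formulation} is trivially contained in that of~\eqref{eq: OT resource allocation no last row}. First I would establish the reverse inclusion by showing that this dropped constraint is automatically implied by the remaining ones, together with the standing assumption $p, q \in \Delta^n$.

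Concretely, I would take any $(x_i)$ feasible for~\eqref{eq: OT resource allocation no last row}. The first $n$ rows of $M$ (all retained) give $\sum_{i=1}^n x_i = p$; left-multiplying by $\one_n^{\intercal}$ and using $\one_n^{\intercal} p = 1$ yields $\sum_{i=1}^n \one_n^{\intercal} x_i = 1$. The retained column-marginal blocks $n+1, \dots, 2n-1$ give $\one_n^{\intercal} x_i = q_i$ for $i = 1, \dots, n-1$, hence $\sum_{i=1}^{n-1} \one_n^{\intercal} x_i = \sum_{i=1}^{n-1} q_i = 1 - q_n$, where the last equality uses $\one_n^{\intercal} q = 1$. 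Subtracting the two identities gives $\one_n^{\intercal} x_n = q_n$, which is exactly the removed constraint. This proves the feasible sets are equal, so the two problems have identical optimal values and minimizers; combined with Lemma~\ref{lem: OT reformulation}, this yields the claim.

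I expect the only delicate point---rather than a genuine obstacle---to be making explicit where the probability-simplex assumption $p, q \in \Delta^n$ enters: it is precisely what guarantees $\one_n^{\intercal} p = \one_n^{\intercal} q = 1$ and therefore the consistency of the right-hand side after deleting the row. This is the same linear dependency underlying the rank count $\mathrm{rank}(M) = 2n-1$ noted in Remark~\ref{rem: remove last row}; the proof here simply identifies the redundant row explicitly and verifies that its deletion leaves the feasible set unchanged.
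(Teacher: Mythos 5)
Your proof is correct and follows essentially the same route as the paper, which justifies the proposition solely by the observation in Remark~\ref{rem: remove last row} that $\mathrm{rank}(M) = 2n-1$ makes one row redundant. Your version is in fact more complete than the paper's: you explicitly verify that the right-hand side respects the same linear dependency (via $\one_n^{\intercal} p = \one_n^{\intercal} q = 1$), a consistency check the paper leaves implicit.
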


\subsection{Reformulating DE-OT as a DCCO problem}
Similar to the notation changing in Section~\ref{sec: OT reformuation}, we will also do that for the EOT reformulation: Agents are indexed using~$i$, with the local decision variable and the cost matrix of agent $i$ denoted by $x_i \in \R^{n^2}_{+}$ and ${c}_i \in \R^{n^2}$ respectively 
(instead of $X^i \in \R^{n \times n}$
and $C^i \in \R^{n \times n}$). 
The reformulation for DE-OT is not as straightforward as compared to D-OT due to
the equitable constraint.
In particular, we must find a way to represent
the equitable constraint into a coupling affine equality constraint. To achieve this, we use the Laplacian matrix $L \in \R^{N \times N}$ of the network to help us, thanks to the property: $L x = 0 \iff x \in \mathrm{span}(\one)$.

Furthermore, since $\mathrm{rank}(L) = N-1$, we can remove its last row, similar to the case in Proposition~\ref{cor: OT reformulation}.
Let $\tilde{L} =  
\left[
\begin{array}{c|c|c}
\tilde{L}_{\ast,1} & \cdots & \tilde{L}_{\ast,N} 
\end{array}
\right]
\in \R^{(N-1)\times N}$ be the first $N-1$ rows of the Laplacian matrix $L$ associated with the network, where $\tilde{L}_{\ast,i}$ is its $i^{\text{th}}$ column.
Note that $\tilde{L}$ has a full row rank.

\begin{proposition}
\label{lem: EOT reformulation}
Problem~\eqref{eq: EOT} is equivalent to
\begin{equation}
\label{eq: EOT resource allocation}
    \min_{ 
    \substack{
    x_i 
    }
    } 
    \sum_{i=1}^N  c_i^{\intercal} x_i
    + \iota_{\ge \zero}( x_i)
    \
    \textrm{s.t.} \  
    \sum_{i=1}^N 
    \begin{bmatrix}
    \tilde{M} \\
    E_i
    \end{bmatrix}
    x_i = 
    \begin{bmatrix}
    p \\
    \tilde{q} \\
    \bm{0}_{N-1}
    \end{bmatrix},
\end{equation}
where the indicator function 
$\iota_{\ge \zero}$ and
and vector $\tilde{q}$ are as defined in Lemma~\ref{lem: OT reformulation} and Propostion~\ref{cor: OT reformulation},
while matrix $\tilde{M}$ is the first $2n-1$ rows of
matrix $M$ from~\eqref{def: margin mat} and matrices~$E_i$ are defined as follows:
\begin{equation}
\label{eq: equitable mat}
    E_i \coloneqq
    \left(\tilde{L}_{\ast,i} \right)  c_i^{\intercal} =
    \tilde{L}
     \begin{bmatrix}
    \bm{0}_{(i-1) \times n^2} \\
    c_i^{\intercal} \\
    \bm{0}_{(N-i) \times n^2}
    \end{bmatrix} 
    \in \R^{(N-1) \times n^2}.
\end{equation}
\end{proposition}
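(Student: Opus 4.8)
The plan is to exhibit a bijection between feasible points of~\eqref{eq: EOT} and feasible points of~\eqref{eq: EOT resource allocation} under which the two objectives coincide, so that minimizers correspond. First I would identify $x_i = \mathrm{vec}(X^i) \in \R^{n^2}$ and $c_i = \mathrm{vec}(C^i) \in \R^{n^2}$, so that $c_i^{\intercal} x_i = \langle C^i, X^i \rangle$ and the objective $\sum_i c_i^{\intercal} x_i$ of~\eqref{eq: EOT resource allocation} agrees with that of~\eqref{eq: EOT}. The indicator terms $\iota_{\ge \zero}(x_i)$ encode exactly the constraints $X^i \ge 0$, adding $0$ on the feasible set and $+\infty$ off it. It then remains to show that the single affine coupling constraint in~\eqref{eq: EOT resource allocation} is equivalent to the marginal and equitable constraints of~\eqref{eq: EOT} taken together, which I treat block by block since the two row groups $\tilde{M}$ and $E_i$ act on disjoint components of the right-hand side.

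For the first $2n-1$ rows, I would use $\sum_i \tilde{M} x_i = \tilde{M}\,\mathrm{vec}\big(\sum_i X^i\big)$, so that applying the computation of Lemma~\ref{lem: OT reformulation} to the matrix $\sum_i X^i$ shows that $\sum_i \tilde{M} x_i = (p^{\intercal}, \tilde{q}^{\intercal})^{\intercal}$ is equivalent to $\big(\sum_i X^i\big)\one_n = p$ and $\big(\sum_i X^i\big)^{\intercal}\one_n = q$. The dropped last marginal row is recovered automatically because $p, q \in \Delta^n$, exactly as justified in Remark~\ref{rem: remove last row} and Proposition~\ref{cor: OT reformulation}. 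This block therefore reproduces the marginal constraint of~\eqref{eq: EOT}.

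The heart of the argument is the last $N-1$ rows. Writing $d_i \coloneqq c_i^{\intercal} x_i = \langle C^i, X^i \rangle$ and $\vecd \coloneqq (d_1,\dots,d_N)^{\intercal}$, the defining identity~\eqref{eq: equitable mat} gives $E_i x_i = \tilde{L}_{\ast,i}\,(c_i^{\intercal} x_i) = d_i\,\tilde{L}_{\ast,i}$, so that $\sum_{i=1}^N E_i x_i = \sum_{i=1}^N d_i\,\tilde{L}_{\ast,i} = \tilde{L}\,\vecd$. Hence the block constraint $\sum_i E_i x_i = \zero_{N-1}$ reads $\tilde{L}\,\vecd = \zero$, and I must show this is equivalent to $d_1 = \dots = d_N$, i.e.\ to the equitable constraint. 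Since each row of $\tilde{L}$ is a row of the graph Laplacian $L$ and therefore sums to zero, we have $\one \in \ker \tilde{L}$; combined with $\mathrm{rank}(\tilde{L}) = N-1$ this forces $\dim \ker \tilde{L} = 1$, whence $\ker \tilde{L} = \mathrm{span}(\one)$. Therefore $\tilde{L}\vecd = \zero \iff \vecd \in \mathrm{span}(\one) \iff d_1 = \dots = d_N$, which is exactly the equitable constraint $\langle C^k, X^k\rangle = \langle C^l, X^l\rangle$ for all $k,l$. Assembling the three blocks yields the stated equivalence.

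The step I expect to be the main obstacle is making the equitable-block identification fully rigorous, specifically pinning down $\ker \tilde{L} = \mathrm{span}(\one)$. The subtlety is that we work with the truncated Laplacian $\tilde{L}$ rather than $L$, so one must argue that deleting the redundant last row preserves precisely the consensus characterization $L\vecd = \zero \iff \vecd \in \mathrm{span}(\one)$; the dimension-counting shortcut above is the cleanest route, as it avoids having to track explicitly how the removed row depends linearly on the remaining $N-1$ rows.
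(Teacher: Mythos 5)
Your proposal is correct and follows essentially the same route as the paper: vectorize to identify objectives, reuse the Lemma~\ref{lem: OT reformulation} computation for the marginal block, and reduce the equitable constraint to $\tilde{L}\vecd = \zero$ together with $\mathrm{null}(\tilde{L}) = \mathrm{span}(\one)$. The only difference is that you justify $\ker \tilde{L} = \mathrm{span}(\one)$ explicitly via rank--nullity (using the full row rank of $\tilde{L}$, which itself rests on connectivity of the graph), whereas the paper simply asserts this fact after noting $\mathrm{rank}(L)=N-1$.
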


\begin{proof}
The verification for the objective function, non-negativity constraint, and marginal constraint
is similar to the proof of Lemma~\ref{lem: OT reformulation}.
The equitable constraint is enforced since
$\begin{bmatrix}
    c_1^{\intercal} x_1 ; \cdots;
    c_N^{\intercal} x_N
    \end{bmatrix}$
is a constant vector if and only if
\begin{equation*}
    \sum_{i=1}^N E_i \, x_i  =
    \tilde{L} \left( \sum_{i=1}^N 
    \begin{bmatrix}
    \zero_{i-1} \\
    c_i^{\intercal} x_i \\
    \bm{0}_{N-i}
    \end{bmatrix}   
    \right)=
    \tilde{L} \left( 
     \begin{bmatrix}
    c_1^{\intercal} x_1 \\
    \vdots \\
    c_N^{\intercal} x_N
    \end{bmatrix} \right) = \zero,
\end{equation*}
since $\mathrm{null}(\tilde{L}) = \mathrm{span}\left(  \one   \right)$.
\end{proof}
We intentionally use the same notations to denote 
different things in the reformulations~\eqref{eq: OT resource allocation no last row} and~\eqref{eq: EOT resource allocation} because of their similarity. 
Indeed, we can abstract them into the form of 
\begin{equation}
\label{eq: abstract formulation}\begin{aligned}
    &\min_{
    \substack{
    \vecx = (x_i) \in \R^{Nd}
    }}  
    f(\vecx) \coloneqq
    \mathbf{c}^{\intercal} \vecx
    + \iota_{\ge \zero}( \vecx )
     =
    \sum_{i=1}^N 
    c_i^{\intercal} x_i
    + \iota_{\ge \zero}( x_i)
    \\
    & \textrm{s.t.} \quad 
    A \vecx = \sum_{i=1}^N 
    A_i  x_i =  b .
\end{aligned}
\end{equation}
where the number of agents $N$, the vector $b$ and matrices $A_i$, and dimension $d$ of $x_i$ and $c_i$ are problem-dependent.


\subsection{DCCO algorithms to solve reformulated OT and EOT}
\label{sec: DCCO unregularized}

Reformulating D-OT~\eqref{eq: decentralized OT} and DE-OT~\eqref{eq: EOT} into~\eqref{eq: abstract formulation}
allows us to apply existing algorithms for DCCO (e.g., \cite{chang2014multi,  chang2016proximal, su2021distributed, alghunaim2019proximal, li2022implicit, falsone2020tracking, li2023proximal}) to solve it.
In particular, we have corresponding $f_i(x_i) = c_i^{\intercal} x_i$  which is smooth and (non-strongly) convex, and $g_i = \iota_{\ge \zero}$ which is non-smooth. Before describing the algorithms, we  explicitly state the assumption of the network.

\textbf{Network assumption.}
In problem~\eqref{eq: abstract formulation}, the network connecting the agents is an undirected graph $G =
(V, E)$, where $V$ is the set of agents
and $E$ is the set of edges. An edge $(i,j) \in E$ if and only if agents $i$ and $j$ are neighbors. 
For each agent~$i$, we define the set of its neighbors as 
$\mathcal{N}_i = \{j \in V \mid (i,j) \in E\}$. 
We assume the network satisfies the following standard assumption in decentralized optimization, which implies that any two agents can influence each other in the long run. 
\begin{assumption}
\label{as: graph}
    The graph $G$ is connected
    and static.
\end{assumption}

The algorithms in \cite{chang2014multi, chang2016proximal, su2021distributed} can be used to solve problem~\eqref{eq: abstract formulation},
with a convergence rate of $O(1/k)$, where~$k$ is the iteration number. 
However, using algorithms in \cite{chang2014multi, chang2016proximal, su2021distributed} to solve problem~\eqref{eq: abstract formulation} requires solving a  quadratic program  at each iteration.
In contrast, the primal-dual-based algorithms such as those in \cite{alghunaim2019proximal, li2023proximal} are single-loop (i.e., do not require an inner subroutine), but they can establish only an asymptotic convergence for non-smooth problems.
The continuous-time algorithm in \cite{li2022implicit} is also single-loop, but its performance may not carry over to discretized implementation. In Table~\ref{table: DCCO alg}, we summarize discrete-time algorithms that can be used to solve problem~\eqref{eq: abstract formulation}. 

We adapt \cite[Algorithm 1]{chang2016proximal} for~\eqref{eq: abstract formulation}, which yields Algorithm~\ref{alg: exact PDC-ADMM}. Note that setting $\tau_i = 0$ 
recovers~\cite[Algorithm 3]{chang2014multi}, which has the following performance guarantee\footnote{Setting $\tau_i = 0$ forces $y_i^k = x_i^k$, making $y_i^k$ and $z_i^k$ redundant.}.
\begin{proposition}
    Let Assumption~\ref{as: graph} hold.
    For all $c_i \in \R^d$, the sequence $(\vecx^k)$ generated by
    Algorithm~\ref{alg: exact PDC-ADMM} with  $\tau_i = 0$ and any parameter $\rho_i > 0$
    converges to the optimal solution~$\vecx^*$ of Problem~\eqref{eq: abstract formulation}. Furthermore, 
    \begin{equation}
        |f(\bar{\vecx}^k) - f(\vecx^*) | +
        \left\| A \bar{\vecx}^k- b \right\|_2 = O(1/k),
    \end{equation}
    where $\bar{\vecx}^k = 
    \frac{1}{k} \sum_{t=1}^k 
    {\vecx}^t$. 
\end{proposition}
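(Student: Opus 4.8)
The plan is to treat this statement as an instance of the ergodic convergence guarantee already established for the distributed ADMM of \cite{chang2014multi}, and to verify that the reformulated problem~\eqref{eq: abstract formulation} meets the hypotheses of that analysis. First I would confirm the structural requirements: $f_i(x_i)=c_i^{\intercal}x_i$ is linear, hence smooth and convex; $g_i=\iota_{\ge \zero}$ is proper, closed, and convex; and the coupling constraint $\sum_i A_i x_i = b$ is affine. Together with Assumption~\ref{as: graph} (which guarantees that the consensus step on the local multiplier copies mixes information across the whole network), this places the problem in the convex setting for which Algorithm~\ref{alg: exact PDC-ADMM} with $\tau_i = 0$ was designed. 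I would also record that a primal--dual optimal pair $(\vecx^*,\lambda^*)$ exists: since the underlying OT/EOT program is a feasible and bounded linear program, strong duality holds and a finite saddle point of the Lagrangian $\mathcal{L}(\vecx,\lambda)=f(\vecx)+\langle\lambda,A\vecx-b\rangle$ is available.

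The core mechanics I would reproduce (or cite in full from \cite{chang2014multi}) is a single one-step progress inequality. Using the optimality conditions of the proximal $x$-update together with the dual ascent step, one bounds, for every fixed reference $(\vecx,\lambda)$, the per-iteration gap $\mathcal{L}(\vecx^{k+1},\lambda)-\mathcal{L}(\vecx,\lambda^{k+1})$ by the decrease $\tfrac12\big(V^k(\vecx,\lambda)-V^{k+1}(\vecx,\lambda)\big)$ of a quadratic Lyapunov function $V^k$ measuring the weighted squared distance of the primal and dual iterates to $(\vecx,\lambda)$. Telescoping over $t=0,\dots,k-1$ cancels the $V$-terms and leaves $\sum_{t}\big(\mathcal{L}(\vecx^{t+1},\lambda)-\mathcal{L}(\vecx,\lambda^{t+1})\big)\le\tfrac12 V^0(\vecx,\lambda)$. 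Dividing by $k$ and invoking convexity of $\mathcal{L}$ in $\vecx$ and linearity in $\lambda$ (Jensen's inequality applied to the averages $\bar{\vecx}^k$ and $\bar{\lambda}^k$) yields a uniform $O(1/k)$ bound on the primal--dual gap $\mathcal{L}(\bar{\vecx}^k,\lambda)-\mathcal{L}(\vecx^*,\bar{\lambda}^k)$ over any bounded set of reference multipliers $\lambda$.

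The final, and most delicate, step is converting this saddle-point gap into the two quantities appearing in the statement. For the constraint residual I would evaluate the gap at a perturbed multiplier $\lambda=\lambda^*+r\,(A\bar{\vecx}^k-b)/\|A\bar{\vecx}^k-b\|$ for a fixed radius $r>\|\lambda^*\|$; because $\vecx^*$ is feasible and $\bar{\vecx}^k\ge\zero$ lies in the domain of $\iota_{\ge \zero}$, this isolates a term $r\|A\bar{\vecx}^k-b\|$ on the left and yields $\|A\bar{\vecx}^k-b\|=O(1/k)$. Feeding this back, the choice $\lambda=\lambda^*$ gives the upper bound $f(\bar{\vecx}^k)-f(\vecx^*)\le O(1/k)$, while weak duality $f(\bar{\vecx}^k)-f(\vecx^*)\ge\langle\lambda^*,b-A\bar{\vecx}^k\rangle\ge-\|\lambda^*\|\,\|A\bar{\vecx}^k-b\|$ supplies the matching lower bound, so that $|f(\bar{\vecx}^k)-f(\vecx^*)|$ is also $O(1/k)$; summing the two estimates gives the claim. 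I expect this conversion --- pinning down the two-sided bound on $|f(\bar{\vecx}^k)-f(\vecx^*)|$ and keeping every hidden constant finite via the a priori bound on $\|\lambda^*\|$ and the fixed radius $r$ --- to be the main obstacle, since the nonsmooth indicator $g_i$ forbids naive gradient arguments and forces one to carry the subgradient through the proximal optimality conditions. Asymptotic convergence of $(\vecx^k)$ to $\vecx^*$ then follows from the Fej\'er monotonicity of the Lyapunov sequence $V^k(\vecx^*,\lambda^*)$.
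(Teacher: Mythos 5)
Your proposal is correct and follows essentially the same route as the paper: the paper offers no proof beyond observing that setting $\tau_i = 0$ recovers Algorithm~3 of \cite{chang2014multi}, whose ergodic $O(1/k)$ guarantee applies once problem~\eqref{eq: abstract formulation} is seen to satisfy its hypotheses (convexity, affine coupling, connected graph, existence of a saddle point via LP duality). Your additional unpacking of that guarantee --- the one-step Lyapunov inequality, telescoping with Jensen's inequality, and the conversion of the saddle-point gap into the two-sided objective bound and feasibility residual via a perturbed multiplier and weak duality --- is a faithful reconstruction of how the cited result is actually proved, so it fills in detail the paper leaves implicit rather than taking a different path.
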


\begin{remark}
\label{rem: practice vs theory}
As in Remark~\ref{rem: OT alg in practice}, we note that algorithms with ``poor'' theoretical properties may perform well in practice and vice versa. 
For instance, since most entries of a high-dimensional transportation plan are near-zero, performing a proximal gradient steps at every iteration (e.g., \cite{alghunaim2019proximal, li2023proximal}) might lead to numerical instability. 
In contrast, solving a quadratic program at each iteration 
(with  off-the-shelf solvers)
could be advantageous in practice.
\end{remark}

\begin{table}[t] 
\caption{Algorithms for Problem~\eqref{eq: abstract formulation}.
}
\label{table: DCCO alg}
\begin{center}
\begin{tabular}{ |c | c | c |c |}
\hline
 Paper & 
 Convergence rate &  Singe-loop\\ 
 \hline
 
 \cite{falsone2020tracking} & Asymptotic & No\\  
 \hline 
 
\cite{chang2014multi, chang2016proximal} & $O(1/k)$, ergodic & No \\  
 \hline
 
\cite{su2021distributed} &  $O(1/k)$, non-ergodic & No \\  
 \hline
 
\cite{alghunaim2019proximal, li2023proximal}  &  Asymptotic & Yes \\
\hline
\end{tabular}
\end{center}
\end{table}

\section{Euclidean regularized D-OT and DE-OT}\label{sec:euclidean}
Based on Section~\ref{sec: DCCO unregularized}, no existing discrete-time single-loop DCCO algorithm could directly solve problem~\eqref{eq: abstract formulation} with an explicit non-asymptotic convergence rate. This is not unexpected since our objective function is non-strongly convex and non-smooth.
In this section, we develop a single-loop algorithm for~\eqref{eq: abstract formulation}, which has also $O(1/\epsilon)$ iteration complexity.
Our first step is to perturb~\eqref{eq: abstract formulation} with a squared-Euclidean norm regularizer to make it strongly convex.
\begin{equation}
\label{eq: square regularized}
\min_{ x_i \in \R^d}
    \sum_{i=1}^n  c_i^{\intercal} x_i
    + \frac{\eta}{2} \| x_i \|^2 
    + \iota_{\ge \zero}( x_i)
    \
    \textrm{s.t.} \  
    \sum_{i=1}^N 
    A_i
    x_i = 
    b.
\end{equation}
This problem is an $\frac{\eta}{2}$-approximation of problem~\eqref{eq: abstract formulation} in terms of the objective function value:
If $\vecx^*$ and $\hat{\vecx}$ are
 minimizers of problems~\eqref{eq: abstract formulation} and~\eqref{eq: square regularized} respectively, then 
 $f(\hat{\vecx}) - f(\vecx^*) \le \frac{\eta}{2}$.

\begin{remark}
    In the centralized OT literature,
    it is common to use entropic regularization (see Section~\ref{sec: related work}).
    The entropic-regularized problem  can be written compactly as
\begin{equation}
\label{eq: compact entropic regularized LP}
    \min_{ 
    \substack{
    \vecx \in \R^{Nd}
    }
    } \quad  
    \mathbf{c}^{\intercal} \vecx
    +  H(\vecx)
    \qquad
    \textrm{s.t.} \qquad  
    A  \vecx =  b,
\end{equation}
where $H(\cdot)$ is the entropy function.
If $\lVert \vecx \rVert$ is unbounded, then problem~\eqref{eq: compact entropic regularized LP} is not strongly convex. Consequently, the (decentralized) dual problem of \eqref{eq: compact entropic regularized LP} is not smooth.  If we set $\lVert \vecx \rVert = 1$, then the corresponding smooth dual problem
\begin{equation}
    \min_{
    \substack{
    \lambda_1 = \cdots = \lambda_N
    }
    }
    \, \eta \log\left(
    \sum_{i=1}^N
     \left\| \exp\bigg(\frac{-\mathbf{c}_i - A_i^{\intercal} \lambda_i}{\eta}\bigg)\right\|_1 \right) 
     + \eta
    + \frac{1}{N} \lambda_i^{\intercal}  \vecq,
\end{equation}
 is not separable, making it unsuitable for current decentralized optimization methods. 
\end{remark}

\subsection{PDC-ADMM}
In this section, we provide an algorithm to solve~\eqref{eq: square regularized} with an $O (1/k)$ ergodic convergence rate. The algorithm is an inexact variant of Algorithm~\ref{alg: exact PDC-ADMM}, which is adapted from \cite[Algorithm 1]{chang2016proximal} from Section~\ref{sec: DCCO unregularized}. In \cite{chang2016proximal}, the author apply their Algorithm 1 to solve problems of the form
\begin{equation}
\label{eq: PDC problem}
\begin{aligned}
    &\min_{ x_i \in \mathcal{S}_i, y_i \ge 0    } \quad 
    \sum_{i=1}^N f_i(x_i)
    \\
    & \textrm{s.t.} 
    \sum_{i=1}^N 
    A_i  x_i =  b \text{ and } B_i  x_i  + y_i - v_i = 0 
    \text{ for all } i,
\end{aligned}
\end{equation}
which is a special case of Problem~\eqref{eq: DCCO}.
Matching it with \eqref{eq: square regularized}, we have $f_i(x_i) =  c_i^{\intercal} x_i
    + \frac{\eta}{2} \| x_i \|^2 $, which is strongly convex and smooth,
$\mathcal{S}_i = \R^{d}$, $B_i = -I_{d}$, and ${v}_i = 0$:
\begin{equation}
\label{eq: square regularized smooth}
\begin{aligned}
    &\min_{
    \substack{
    \vecx = (x_i) \in \R^{Nd}, \\ \vecy = (y_i) \ge 0 
    }} \quad 
    f(\vecx) \coloneqq
    \sum_{i=1}^N 
    c_i^{\intercal} x_i
    + \frac{\eta}{2} \| x_i \|^2
    \\
    & \textrm{s.t.} \quad 
    A \vecx = \sum_{i=1}^N 
    A_i  x_i =  b \text{ and } 
    \vecy - \vecx = \zero.
\end{aligned}
\end{equation}
Compared to the formulation in~\eqref{eq: square regularized} where the non-negativity constraint $x_i \in \R^d_+$ is represented using an indicator function $\iota_{\ge \zero}(x_i)$ in the objective function, problem formulation~\eqref{eq: square regularized smooth} represents this constraint as an equality constraint $ - x_i  + y_i  = 0$ using a non-negative variable $y_i \ge 0$.

In solving~\eqref{eq: PDC problem},
Algorithm 1 of~\cite{chang2016proximal} solves an expensive subproblem at each iteration. 
In the same paper, the author gives an inexact update, which has a low-complexity implementation (see \cite[Eq. (38)-(39)]{chang2016proximal}).
Adapting this for problem~\eqref{eq: square regularized smooth} gives us a single-loop algorithm, which is obtained by replacing the update of $(x_i^k, y_i^k)$ in  Line~\ref{line: expensive update}
of Algorithm~\ref{alg: exact PDC-ADMM} with
the following closed-form updates:
\begin{align}
    y_i^{k+1} &\coloneqq
    \Big(1- \frac{1}{\beta_i \tau_i} \Big)y_i^{k}    
     + \frac{1}{\beta_i \tau_i} 
     \Big(x_i^k - \tau_i z_i^k \Big) 
     \label{eq: inexact y update}
     \\
     x_i^{k+1} &\coloneqq
    x_i^k - \frac{1}{\beta_i} 
    \bigg(
    c_i + \eta x_i^k + 
    \frac{1}{\tau_i} 
    \big(
     x_i^k - y_i^{k+1} - \tau_i z_i^k \big)
     +  
     \notag
     \\
     &\dfrac{  A_i^{\intercal}}{2 \rho |\mathcal{N}_i|}
     \bigg( 
     A_i {x}_i^{k+1} - \frac{1}{N} {b} 
    - p_i^k + 
    \rho  \sum\limits_{j \in \mathcal{N}_i}
    \left( \lambda_j^k + \lambda_i^k \right) 
    \bigg)
    \bigg)
    \label{eq: inexact x update},
\end{align}
where $\beta_i > 0$ is a penalty parameter.
The performance guarantee of the inexact variant of Algorithm~\ref{alg: exact PDC-ADMM} is as given,
which follows immediately from~\cite[Theorem 2]{chang2016proximal}.
\begin{theorem}
\label{thm: main}
    Let Assumption~\ref{as: graph} hold.
    Consider the inexact variant of Algorithm~\ref{alg: exact PDC-ADMM} obtained by replacing Line~\ref{line: expensive update} with~\eqref{eq: inexact y update} and~\eqref{eq: inexact x update}.
    Then the sequence $(\vecx^k, \vecy^k)$ generated by
    Algorithm~\ref{alg: exact PDC-ADMM} with parameters
    $\rho, \beta_i, \tau_i > 0$ satisfying
    \begin{equation}
    \label{eq: step size condition}
        \beta_i \tau_i \ge 1
        \ \text{and} \
        \Big( \beta_i - 
        \frac{\beta_i}{\beta_i \tau_i - 1}
        - 1\Big) I_d 
        - 
        \frac{1}{2 \rho |\mathcal{N}_i|}
        A_i^{\intercal} A_i
        \succ \zero,
    \end{equation}
    converges to the optimal solution $(\vecx^*, \vecy^* = \vecx^*)$ of Problem~\eqref{eq: square regularized smooth}. Furthermore, 
    \begin{equation}
    \label{eq: convergence rate}
        |f(\bar{\vecx}^k) - f(\vecx^*) | +
        \left\| A \bar{\vecx}^k- b \right\|
        + \| \bar{\vecy}^k - \bar{\vecx}^k \| = O(1/k),
    \end{equation}
    where $(\bar{\vecx}^k,  \bar{\vecy}^k)= 
    \frac{1}{k} \sum_{t=1}^k 
    ({\vecx}^t, {\vecy}^t)$. 
\end{theorem}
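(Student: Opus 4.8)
The plan is to obtain the statement as a direct specialization of the convergence result for inexact PDC-ADMM, namely \cite[Theorem 2]{chang2016proximal}, rather than to reprove the underlying primal--dual analysis from scratch. The first step is the \emph{reduction}: I would verify that the regularized smooth problem~\eqref{eq: square regularized smooth} is an instance of the template~\eqref{eq: PDC problem} under the identification already recorded in the text, i.e.\ $f_i(x_i) = c_i^{\intercal} x_i + \frac{\eta}{2}\|x_i\|^2$, local set $\mathcal{S}_i = \R^d$, $B_i = -I_d$, and $v_i = 0$, so that the local constraint $B_i x_i + y_i - v_i = 0$ becomes $y_i - x_i = \zero$; together with $y_i \ge 0$ this encodes the non-negativity of $x_i$. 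Under this identification the coupling constraint $\sum_i A_i x_i = b$ is unchanged and affine.

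The second step is to check that the hypotheses required by \cite[Theorem 2]{chang2016proximal} hold for this data. Each $f_i$ is $\eta$-strongly convex and has $\eta$-Lipschitz gradient $\nabla f_i(x_i) = c_i + \eta x_i$, the constraints are affine, and Assumption~\ref{as: graph} supplies the connected, static network that the consensus reformulation of the dual requires; these are exactly the standing assumptions of that theorem. Strong convexity is also what guarantees that problem~\eqref{eq: square regularized smooth} has a \emph{unique} primal minimizer, so the phrase ``the optimal solution $(\vecx^*,\vecy^*=\vecx^*)$'' is well defined and the convergence claim is unambiguous.

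Third, I would confirm that the closed-form iterates~\eqref{eq: inexact y update}--\eqref{eq: inexact x update} are precisely the specialization of the inexact updates in \cite[Eq.~(38)--(39)]{chang2016proximal} to the present data. Concretely, the $y$-update~\eqref{eq: inexact y update} is the proximal/linearized step for the block variable $y_i$ with penalty $\beta_i\tau_i$, while~\eqref{eq: inexact x update} is obtained by linearizing $f_i$ at $x_i^k$ (producing the gradient term $c_i + \eta x_i^k$), adding the $\frac{1}{\tau_i}$ coupling with $y_i^{k+1}$ and the dual variable $z_i^k$, and including the consensus-penalty term involving $\frac{1}{2\rho|\mathcal{N}_i|}A_i^{\intercal} A_i$ and the neighbor multipliers $\lambda_j^k+\lambda_i^k$. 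Matching these term by term, including the roles of the two dual variables, is routine bookkeeping once $B_i=-I_d$ is substituted.

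The crux, and the step I expect to be the main obstacle, is showing that the step-size requirement~\eqref{eq: step size condition} is exactly the positive-definiteness condition demanded by the descent/Lyapunov analysis behind \cite[Theorem 2]{chang2016proximal} after the above substitutions. The inequality $\beta_i\tau_i\ge 1$ ensures the $y$-elimination in~\eqref{eq: inexact y update} is well posed, and it is the source of the term $\frac{\beta_i}{\beta_i\tau_i-1}$ that appears once $y_i$ is folded back into the $x_i$ step; the residual $-1$ reflects the contribution of $B_i^{\intercal} B_i = I_d$, while $\frac{1}{2\rho|\mathcal{N}_i|}A_i^{\intercal} A_i$ is the coupling-penalty curvature. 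Verifying that these pieces assemble into the stated matrix inequality $\big(\beta_i - \frac{\beta_i}{\beta_i\tau_i-1} - 1\big)I_d - \frac{1}{2\rho|\mathcal{N}_i|}A_i^{\intercal} A_i \succ \zero$, and that this is what certifies both the asymptotic convergence of $(\vecx^k,\vecy^k)$ and the $O(1/k)$ ergodic bound~\eqref{eq: convergence rate} on the three-term residual, is the substantive check. Once this is in place, the theorem follows by directly invoking \cite[Theorem 2]{chang2016proximal}.
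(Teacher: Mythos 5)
Your proposal is correct and takes essentially the same route as the paper: the paper proves Theorem~\ref{thm: main} simply by invoking \cite[Theorem 2]{chang2016proximal} after the identification $f_i(x_i) = c_i^{\intercal} x_i + \frac{\eta}{2}\|x_i\|^2$, $\mathcal{S}_i = \R^d$, $B_i = -I_d$, $v_i = 0$, which is exactly the reduction you describe. Your verification steps (hypothesis checking, matching the inexact updates to \cite[Eq.~(38)--(39)]{chang2016proximal}, and confirming that~\eqref{eq: step size condition} is the specialized positive-definiteness condition) are precisely the bookkeeping the paper leaves implicit in its ``follows immediately'' citation.
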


\begin{remark}
    While $\bar{\vecx}^k$ may not be non-negative, we can do a decentralized projection onto simplex, which has a linear convergence~\cite{iutzeler2018distributed}.
    Furthermore, when $\left\| A \bar{\vecx}^k- b \right\|
        + \| \bar{\vecy}^k - \bar{\vecx}^k \| < 
        \epsilon$,
    then the projected $\hat{\vecx}$ satisfies
    $\| \hat{\vecx} -  \bar{\vecx}^k \| = O(\epsilon)$.
\end{remark}

\begin{corollary}
\label{cor: DOT guarantee}
    Let Assumption~\ref{as: graph} hold, 
    and we run the inexact variant of Algorithm~\ref{alg: exact PDC-ADMM} for
    Problems~\eqref{eq: decentralized OT} and~\eqref{eq: EOT}, both
    with parameters $\rho, \beta_i, \tau_i > 0$ satisfying~\eqref{eq: step size condition}.
    Then for any $\epsilon > 0$, there exists some $K = O(1/\epsilon)$ such that for all iteration $k \ge K$,
    the simplex projection $\hat{\vecx}(k)$ of $\bar{\vecx}^k$ is an $\epsilon$-approximate solution.
    The arithmetic cost per iteration per agent are $O(|\mathcal{N}_i| n)$  and $O(n^2 + |\mathcal{N}_i| (2n + N))$ for Problems~\eqref{eq: decentralized OT} and~\eqref{eq: EOT} respectively.
\end{corollary}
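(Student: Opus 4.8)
The plan is to handle D-OT~\eqref{eq: decentralized OT} and DE-OT~\eqref{eq: EOT} uniformly through their common abstract form~\eqref{eq: abstract formulation}, and to split the claim into two independent parts: the $O(1/\epsilon)$ iteration count and the per-iteration arithmetic cost. For the iteration count I would combine three ingredients already in place: the $\frac{\eta}{2}$-accuracy of the Euclidean-regularized surrogate~\eqref{eq: square regularized}, the $O(1/k)$ ergodic guarantee of Theorem~\ref{thm: main} for~\eqref{eq: square regularized smooth}, and the decentralized simplex projection of the remark following that theorem. For the cost I would expand the closed-form updates~\eqref{eq: inexact y update}--\eqref{eq: inexact x update} together with the dual steps of Algorithm~\ref{alg: exact PDC-ADMM}, exploiting the structure of $A_i$ in each reformulation.

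\textbf{Iteration count.} First I would fix $\eta = \Theta(\epsilon)$ so that~\eqref{eq: square regularized} is an $O(\epsilon)$-approximation in objective value, using $f(\hat{\vecx}) - f(\vecx^*) \le \eta/2$ and $\|\vecx\| \le 1$ for a transport plan. Running the inexact scheme under~\eqref{eq: step size condition}, Theorem~\ref{thm: main} gives $|f_\eta(\bar{\vecx}^k) - f_\eta(\vecx^*_\eta)| + \|A\bar{\vecx}^k - b\| + \|\bar{\vecy}^k - \bar{\vecx}^k\| = O(1/k)$ for the regularized objective $f_\eta$. A short triangle-inequality decomposition of $f(\bar{\vecx}^k) - f(\vecx^*)$ into a regularization gap ($O(\eta)$) plus the Theorem~\ref{thm: main} gap ($O(1/k)$) bounds the unregularized optimality gap, while the same $O(1/k)$ term bounds the residual $\|A\bar{\vecx}^k - b\|$. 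Choosing $k = O(1/\epsilon)$ makes both $O(\epsilon)$; the residual $\|A\bar{\vecx}^k - b\|$ is exactly where the marginal violation~\eqref{eq: marginal constraint violation} (through the $\tilde{M}$ block) and the equitable violation~\eqref{eq: equitable constraint violation} (through the $E_i$ block) are read off, and $\|\bar{\vecy}^k - \bar{\vecx}^k\|$ controls negativity. Applying the decentralized projection then yields a feasible $\hat{\vecx}(k)$ with $\|\hat{\vecx}(k) - \bar{\vecx}^k\| = O(\epsilon)$; boundedness of $c_i$ and $A_i$ transfers the objective and residual bounds, certifying the $\epsilon$-approximate solution in the senses of~\eqref{eq: marginal feasibility} and~\eqref{eq: EOT opt gap}--\eqref{eq: equitable constraint violation}.

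\textbf{Main obstacle.} The hard part will be ensuring that the hidden constant in the $O(1/k)$ rate of Theorem~\ref{thm: main} does not deteriorate as $\eta \to 0$; otherwise the balance $\eta = \Theta(\epsilon)$ would inflate the count to $O(1/\epsilon^2)$ instead of $O(1/\epsilon)$. I would argue that the constant inherited from~\cite[Theorem 2]{chang2016proximal} depends only on the weighted initial primal--dual distances $\|\vecx^0 - \vecx^*_\eta\|$, $\|\lambda^0 - \lambda^*_\eta\|$ and on $\rho, \beta_i, \tau_i$, all of which stay bounded: the step-size condition~\eqref{eq: step size condition} is $\eta$-free, the regularized primal optimum $\vecx^*_\eta$ converges to the minimum-norm optimum of~\eqref{eq: abstract formulation}, and a dual optimum $\lambda^*_\eta$ remains bounded as $\eta \to 0$ by LP duality. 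Hence the constant is $O(1)$ uniformly in small $\eta$, which is precisely what upgrades the naive $O(1/\epsilon^2)$ to $O(1/\epsilon)$.

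\textbf{Arithmetic cost.} Here I would avoid forming $A_i$ densely and instead use its structure. In D-OT, $x_i \in \R^n$ and $A_i = \tilde{M}_i$ from~\eqref{def: margin mat} acts as $x_i \mapsto (x_i,\, \one^\intercal x_i)$ up to placement, so $A_i x_i$ and $A_i^\intercal(\cdot)$ each cost $O(n)$; since $\lambda_j \in \R^{2n-1}$, the neighbor aggregation $\sum_{j \in \mathcal{N}_i}(\lambda_j^k + \lambda_i^k)$ dominates at $O(|\mathcal{N}_i| n)$, giving the stated bound. In DE-OT, $x_i \in \R^{n^2}$ and $A_i = [\tilde{M};\, E_i]$; the block $\tilde{M}$ is the row/column-sum operator on a flattened $n \times n$ matrix, so $\tilde{M} x_i$ and its adjoint cost $O(n^2)$, while $E_i = \tilde{L}_{\ast,i}\, c_i^\intercal$ from~\eqref{eq: equitable mat} is rank one, so $E_i x_i = (c_i^\intercal x_i)\,\tilde{L}_{\ast,i}$ and $E_i^\intercal w = (\tilde{L}_{\ast,i}^\intercal w)\, c_i$ cost $O(n^2 + N)$; any implicit $x$-solve in~\eqref{eq: inexact x update} reduces to inverting a scalar-shifted $A_i^\intercal A_i = \tilde{M}^\intercal\tilde{M} + \|\tilde{L}_{\ast,i}\|^2\, c_i c_i^\intercal$, a structured-plus-rank-one matrix handled in $O(n^2)$. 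Adding the neighbor aggregation over $\lambda_j \in \R^{2n + N - 2}$ at $O(|\mathcal{N}_i|(2n + N))$ yields the claimed $O(n^2 + |\mathcal{N}_i|(2n + N))$ per agent. I expect this part to be routine once the structure is made explicit.
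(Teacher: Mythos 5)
Your proposal is correct, and its skeleton is the same as the paper's own proof: invoke Theorem~\ref{thm: main}, use the projection remark to get $\|\hat{\vecx}-\bar{\vecx}^k\| = O(\epsilon)$, propagate the optimality and feasibility bounds through triangle inequalities with $\|A\|$ and $\|\mathbf{c}\|$, and count arithmetic via the structure of $A_i$. Where you genuinely go beyond the paper is the treatment of the regularization parameter. The paper's proof never mentions $\eta$: it measures the optimality gap against the optimum of the regularized problem~\eqref{eq: square regularized smooth} and leaves implicit the transfer to the unregularized Problems~\eqref{eq: decentralized OT} and~\eqref{eq: EOT} --- which is what the corollary's $\epsilon$-approximate-solution claim, per~\eqref{eq: marginal feasibility} and~\eqref{eq: EOT opt gap}--\eqref{eq: equitable constraint violation}, actually requires --- relying on the earlier remark that~\eqref{eq: square regularized} is an $\eta/2$-approximation. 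Your explicit choice $\eta = \Theta(\epsilon)$, the three-term decomposition of the unregularized gap, and especially the check that the constant in the $O(1/k)$ rate does not blow up as $\eta \to 0$ (the step-size condition~\eqref{eq: step size condition} is $\eta$-free, and the primal and dual optima of the regularized problem stay bounded) close a gap the paper glosses over; without that uniformity argument the stated $O(1/\epsilon)$ could silently degrade to $O(1/\epsilon^2)$, exactly as you observe. You also supply the DE-OT cost accounting (the rank-one structure of $E_i$ from~\eqref{eq: equitable mat}, neighbor aggregation over vectors in $\R^{2n+N-2}$) that the paper omits as ``similar.'' One small caution: the update~\eqref{eq: inexact x update} is intended as an explicit closed-form step --- the $A_i x_i^{k+1}$ on its right-hand side is best read as $A_i x_i^{k}$ --- so your structured-plus-rank-one inversion is unnecessary, though including it does no harm to the complexity claim.
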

\begin{proof}
By Theorem~\ref{thm: main}, there exists some $K$ such that for all $k \ge K$, we have $|f(\bar{\vecx}^k) - f(\vecx^*) | +
        \left\| A \bar{\vecx}^k- b \right\|
        + \| \bar{\vecy}^k - \bar{\vecx}^k \| = O(\epsilon)$.
    We now bound $|f(\hat{\vecx}) - f({\vecx}^*) | +
    \left\| A \hat{\vecx}- b \right\|$.
    We have 
    $\left\| A \hat{\vecx}- b \right\| 
    = \| A \hat{\vecx}  - A \bar{\vecx}^k
    + A \bar{\vecx}^k - b \|
    \le \|A \| \| \hat{\vecx} - \bar{\vecx}^k  \| 
    + \left\| A \bar{\vecx}^k- b \right\| =    O(\epsilon).$ 
    Similarly, 
    $|f(\hat{\vecx}) - f({\vecx}^*) | \le
    \mathbf{c}^{\intercal}
    (\hat{\vecx} - \bar{\vecx}^k + 
    \bar{\vecx}^k - \vecx^* ) 
    \le 
    \|\mathbf{c}\| \|\hat{\vecx} - \bar{\vecx}^k \|
    + \mathbf{c}^{\intercal} 
    (\bar{\vecx}^k - \vecx^* )
   = O(\epsilon) $.
   Therefore, we can find some $K = O(1/\epsilon)$
   such that $\hat{\vecx}$ is an $\epsilon$-optimal solution.
   We now bound the arithmetic cost per iteration per agent for Problem~\eqref{eq: decentralized OT}, but omit the details for Problem~\eqref{eq: EOT} since they are similar.
   The additions and subtractions of vectors are straightforward to bound, with
   the tricky part being the matrix multiplication with $A_i$ and $A_i^{\intercal}$. But since $A_i = \tilde{M}_i$ is sparse and structured with $O(n)$ entries, multiplication takes $O(n)$ arithmetic operations. 
\end{proof}

\begin{remark}
    If the underlying network is a star graph, 
    then the total arithmetic cost per iteration for all agents are
    $O(n  \sum_{i} |\mathcal{N}_i|) = O(n^2)$ and 
    $O(Nn^2 + (2n + N) \sum_{i} |\mathcal{N}_i| ) = O(Nn^2 + N^2)$ 
    for Problems~\eqref{eq: decentralized OT} and~\eqref{eq: EOT} respectively.
\end{remark}

\begin{algorithm}
\caption{Exact PDC-ADMM for Problem~\eqref{eq: square regularized smooth}}
\label{alg: exact PDC-ADMM}
\begin{algorithmic}[1]

\State \textbf{Input}: 
Each agent $i$ is given a vector $c_i \in \R^d$
and vectors $p$ and $q$

\State Each agent $i$ creates matrix $A_i$
and vector $b$

\State Initialize iteration counter $k=0$.

\State Each agent $i$ initialize variables $x_i^0 = y_i^0 
 = z_i^0 = \zero_{d}$,
 and $\lambda_i^0 = p_i^0 = A_i x_i = \zero$ 
 
\State Each agent $i$ initialize penalty parameters $\rho_i > 0, \tau_i \ge 0$.

\State \textbf{Repeat} until a predefined stopping criterion is satisfied
\State~~~For all agents $i \in [N]$ (in parallel)
    \State~~~~~~Exchange $\lambda_i^k$ with neighbors $\mathcal{N}_i$
     
    \State~~Update 
    $(x_i^{k+1}, y_i^{k+1}) \coloneqq 
    \argmin\limits_{ 
    \substack{
    x_i \in \R^{d}\\
    y_i \in \R^{d}_+
    } }
    \Bigg\{
     {c}_i^{\intercal}x_i 
    +  
    \frac{\rho}{4|\mathcal{N}_i|}
    \left\|
    \frac{1}{\rho}(A_i x_i - \frac{1}{N} b)
    -\frac{1}{\rho} p_i^{k}
    + \sum\limits_{j \in \mathcal{N}_i}
    \left( \lambda_j^k + \lambda_i^k \right)
    \right\|_2^2$
    $ 
    +
    \frac{1}{2 \tau_i}
    \left\|
    y_i - x_i +\tau_i z_i^k
    \right\|_2^2
    \Bigg\} $
    \label{line: expensive update}

    \State~~Update $
    \lambda_i^{k+1} \coloneqq \frac{1}{2|\mathcal{N}_i|}
    \left(
    \sum\limits_{j \in \mathcal{N}_i}
    \left( \lambda_j^k + \lambda_i^k \right) 
    + \frac{1}{\rho}   {p}_i^{k+1}
    + \frac{1}{\rho}  \left(A_i  
    {x}_i^{k+1} - \frac{1}{N} {b} \right)
    \right)
    $
    \State~~~~~~Update 
    ${z}_i^{k+1} \coloneqq {z}_i^{k}
    + \frac{1}{\tau_i}(y_i^{k+1} - x_i^{k+1}) $ 
    \State~~~~~~Update 
    ${p}_i^{k+1} \coloneqq {p}_i^{k}
    + \rho \sum_{j \in \mathcal{N}_i} \big(\lambda_i^{k+1} - \lambda_j^{k+1} \big) $ 
\State~~~ Set $k = k+1$
\State\textbf{Output} 
$\bar{x}_i^k \coloneqq 
\frac{1}{k} \sum_{\ell=1}^{k-1} x_i^{\ell}
$ for all agents $i$
\end{algorithmic}
\end{algorithm}

\section{Numerical simulations}\label{sec:numerics}
We simulate the performance of Algorithm~\ref{alg: exact PDC-ADMM} with  $\tau_i = 0$ (i.e., DC-ADMM) and Tracking-ADMM~\cite[Algorithm 1]{falsone2020tracking} for the D-OT problem~\eqref{eq: decentralized OT} and DE-OT problem~\eqref{eq: EOT}.
For D-OT, we consider a cost matrix and transport plan of size $n^2 = 2500$ with $N = n = 50$ agents.
The probability distributions $p$ and~$q$ are randomly generated. The cost matrix $C \in \R^{n \times n}$ is generated randomly with $C_{i,j} = \|x_i - y_j \|^2$ for some random $x_i \sim 
\mathcal{N}\left( 
    \begin{pmatrix}
       1 \\
       1 
    \end{pmatrix}, 
     \begin{pmatrix}
       10 & 1 \\
       1 & 10
    \end{pmatrix}
    \right)$ 
and $y_j \sim \mathcal{N}\left( 
    \begin{pmatrix}
       2 \\
       2 
    \end{pmatrix}, 
     \begin{pmatrix}
       2 & -0.2 \\
       -0.2 & 2
    \end{pmatrix}
    \right)$.
The undirected network is also generated randomly.
For DE-OT, we consider a cost matrix and transport plan of size $n^2 = 400$ with $N = 10$ agents.
As before, the probability distributions $p$ and~$q$, and the undirected network are randomly generated. 
The only difference is the cost matrices generation: we first generate a base cost matrix $C^{\text{base}}$ using the method from above, and then generate $C^k_{i,j} = C^{\text{base}}_{i,j} + \mathcal{N}(0, 10)$ for each agent $k$.
For the computations, we first use MATLAB's built-in LP solver to solve the centralized problem to obtain an optimal objective function value $f^* = \mathbf{c}^{\intercal} \vecx^*$.
We then use Tracking-ADMM  and DC-ADMM to solve the same instances of D-OT and DE-OT.
In Figure~\ref{fig: performance}, we show the optimality gap $f(\vecx^k) - f^*$ and feasibility violation $\|A \vecx^k - b\|$ across iterations for both the D-OT problem~\eqref{eq: decentralized OT} and DE-OT problem~\eqref{eq: EOT} under the algorithms.
The simulation results show that reformulation of Problems~\eqref{eq: decentralized OT} and~\eqref{eq: EOT} into distributed constraint-coupled optimization (DCCO) 
allow them to be solved using existing DCCO algorithms. We also note that Tracking-ADMM has better performances than Algorithm~\ref{alg: exact PDC-ADMM} for both problems in practice, despite a poorer theoretical guarantee. This is not surprising, see Remark~\ref{rem: practice vs theory}.

\begin{figure}[htp]
  \centering
  \subfigure[Optimality gap for D-OT]{\includegraphics[trim={2cm 7cm 2cm 7cm},clip,width=.2\textwidth]{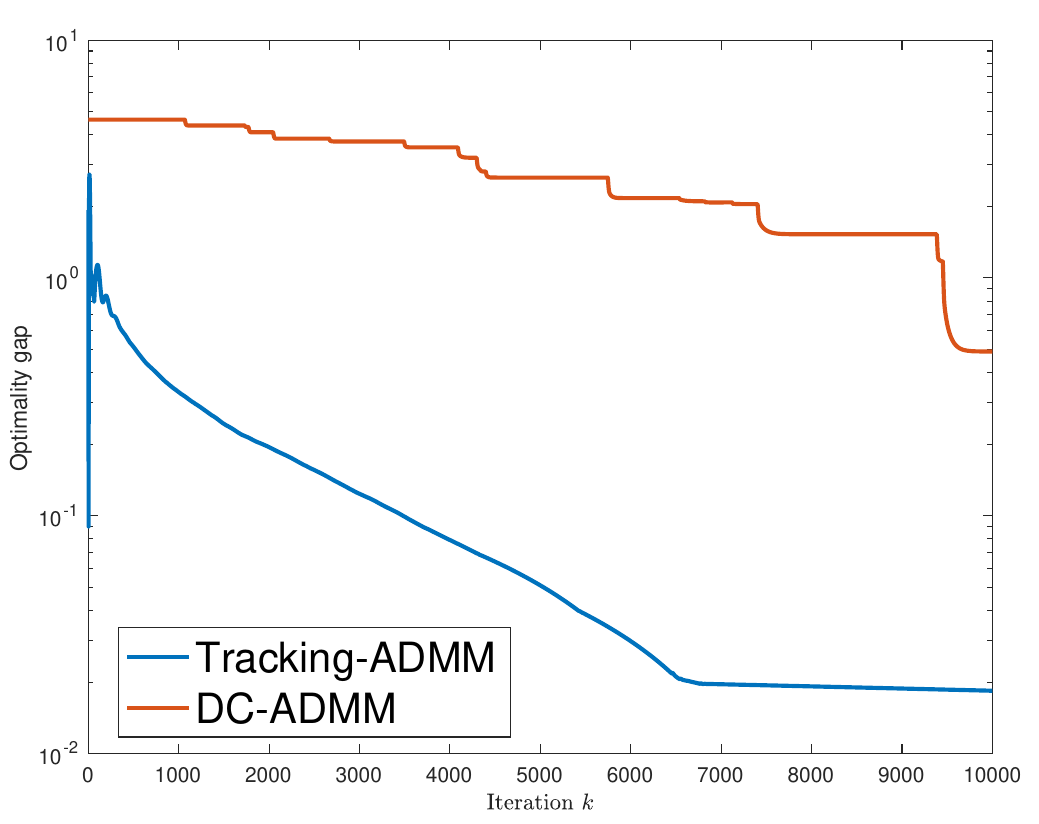}}\,
  \subfigure[Feasibility violation for D-OT]{\includegraphics[trim={2cm 7cm 2cm 7cm},clip,width=.2\textwidth]{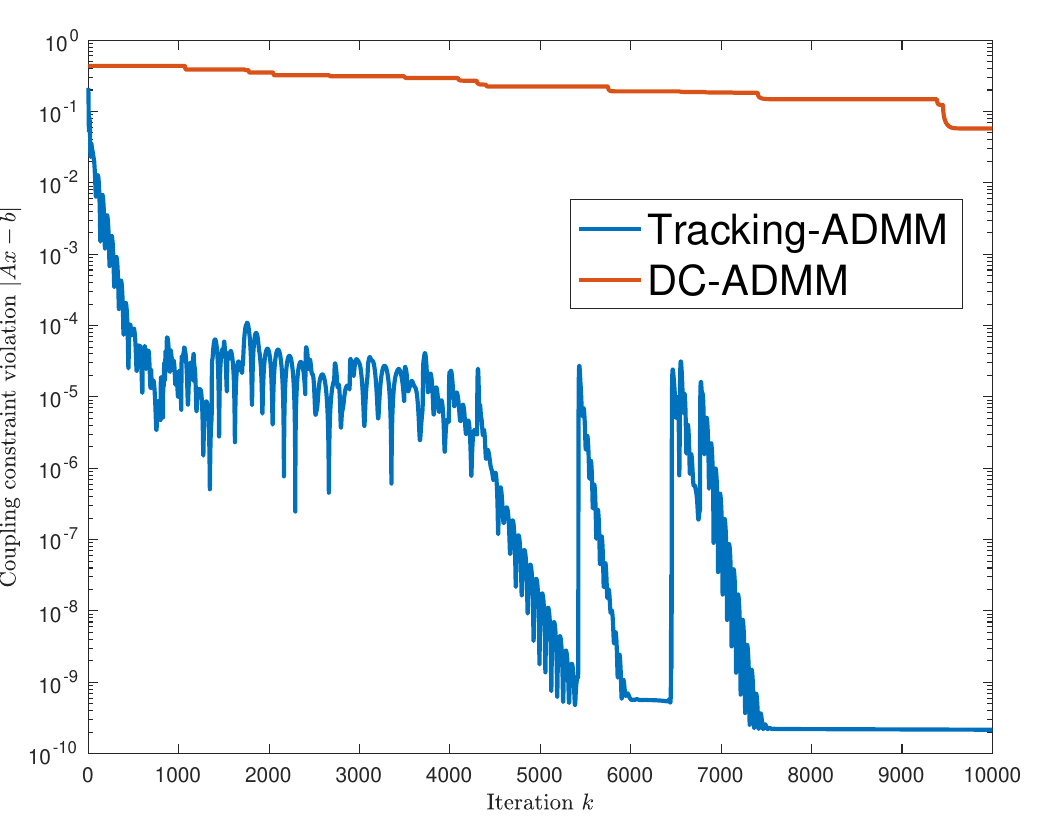}}
   \subfigure[Optimality gap for DE-OT]{\includegraphics[trim={2cm 7cm 2cm 7cm},clip,width=.2\textwidth]{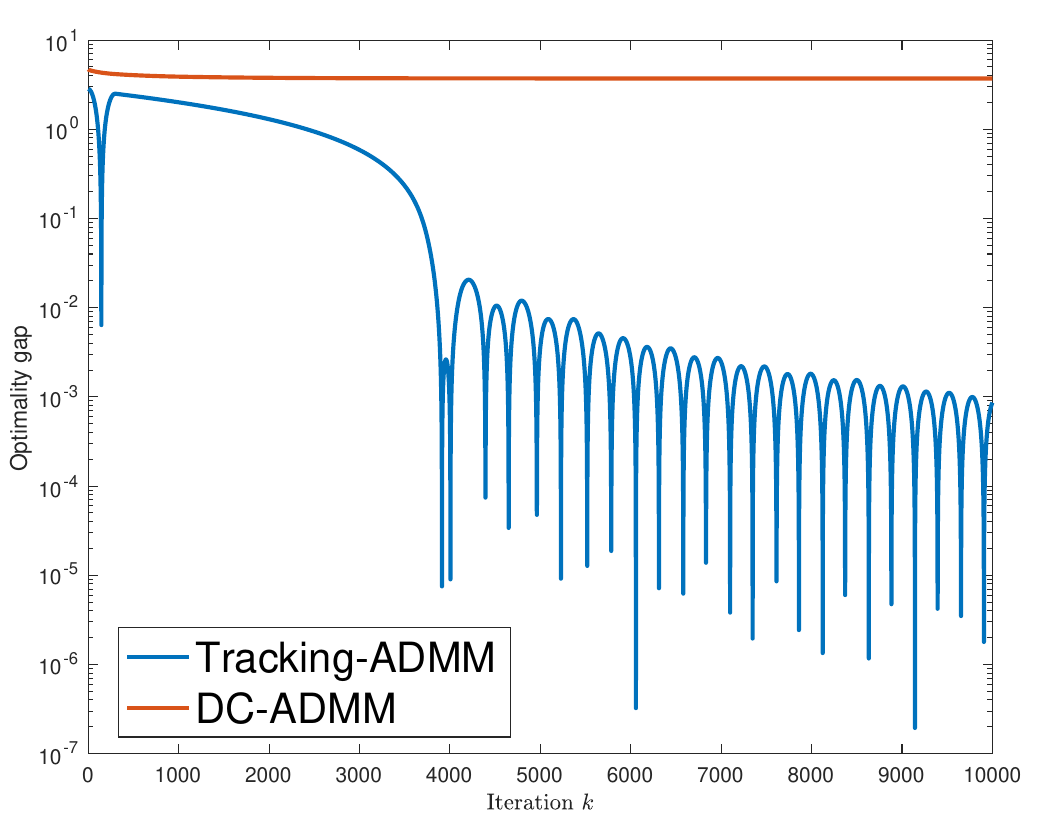}}\,
  \subfigure[Feasibility violation for DE-OT]{\includegraphics[trim={2cm 7cm 2cm 7cm},clip,width=.2\textwidth]{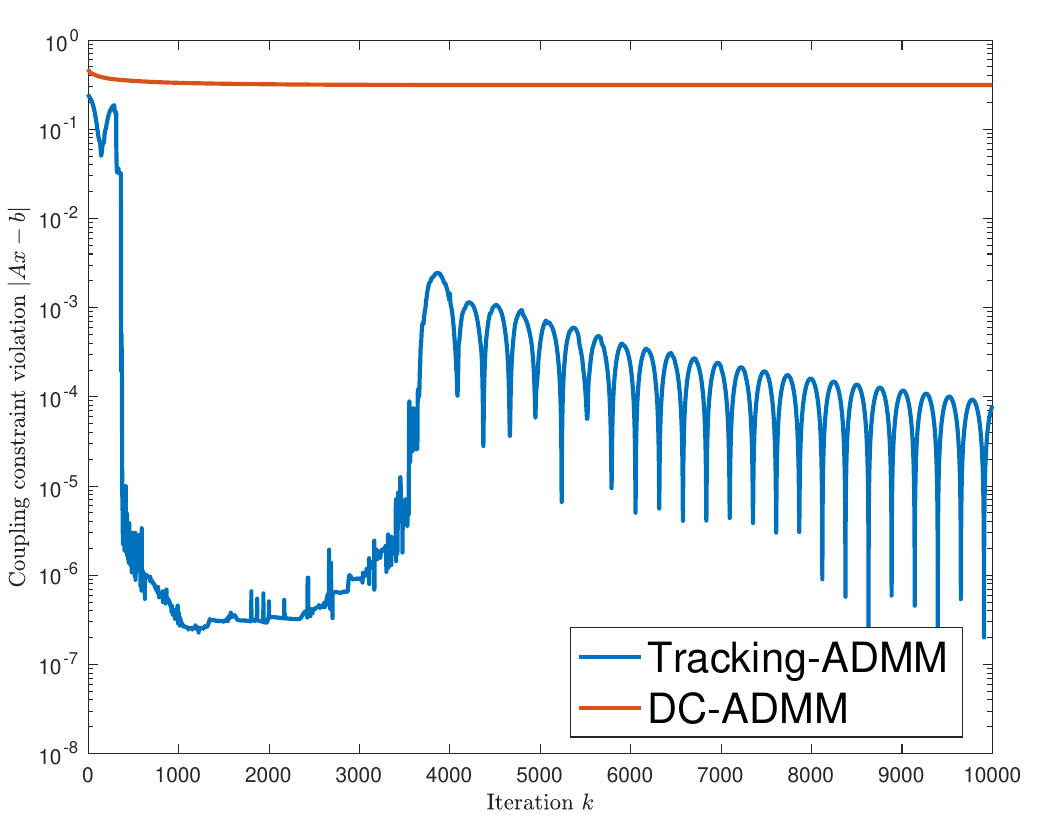}}
  \caption{Optimality gap $f(\vecx^k) - f^*$ and feasibility violation $\|A \vecx^k - b\|$ for D-OT and DE-OT under Tracking-ADMM and DC-ADMM.}
  \label{fig: performance}
\end{figure}

\section{Conclusion}\label{sec:conclusion}

We studied the problem of decentralized optimal transport (D-OT) and decentralized equitable optimal transport (DE-OT),
where a group of agents collaboratively compute an optimal transport plan. In the D-OT setup, each agent has access to a column of the cost matrix. Meanwhile, in the DE-OT, each agent has access to a private cost matrix, and agents try to cooperatively compute a transportation plan that ensures equity.  
We showed that these problems can be reformulated as distributed constrained-coupled problems and adapted existing work to provide a single-loop algorithm that has an iteration complexity of $O(1/\epsilon)$.
Interestingly, our approach for DE-OT has a better iteration complexity than existing centralized methods.  

\bibliographystyle{IEEEtran}
\bibliography{ref}

\end{document}